 \newtheoremstyle{mytheorem}
 {3pt}
 {3pt}
 {\slshape}
 {}
 {\bfseries}
 {.}
 { }
 {}
\numberwithin{equation}{section}
\theoremstyle{theorem}
\newtheorem{theorem}{Theorem}[section]
\newtheorem{lemma}[theorem]{Lemma}
\theoremstyle{definition}
\newtheorem{example}{Example}[section]
\newtheorem{question}{Question}[section]
\newtheorem{remark}{Remark}[section]
\newcommand{\Keywords}[1]{\ifthenelse{\isempty{#1}}{}{\smallskip \smallskip \noindent \textbf{Keywords}. #1}}
\newcommand{\MSC}[2][2010]{\ifthenelse{\isempty{#2}}{}{\smallskip \smallskip \noindent \textbf{#1MSC}. #2}}
\newcommand{\abstractnote}[1]{\ifthenelse{\isempty{#1}}{}{\smallskip \smallskip \noindent \textsuperscript{\dag}#1}}
\def\specialsection{\@startsection{section}{1}%
  \z@{\linespacing\@plus\linespacing}{.5\linespacing}%
  {\normalfont}}
\def\section{\@startsection{section}{1}%
  \z@{.7\linespacing\@plus\linespacing}{.5\linespacing}%
  {\normalfont\scshape}}
\patchcmd{\@settitle}{\uppercasenonmath\@title}{\Large\boldmath}{}{}
\patchcmd{\@settitle}{\begin{center}}{\begin{flushleft}}{}{}
\patchcmd{\@settitle}{\end{center}}{\end{flushleft}}{}{}
\patchcmd{\@setauthors}{\MakeUppercase}{\normalsize}{}{}
\patchcmd{\@setauthors}{\centering}{\raggedright}{}{}
\patchcmd{\section}{\scshape}{\large\bfseries\boldmath}{}{}
\patchcmd{\subsection}{\bfseries}{\bfseries\boldmath}{}{}
\renewcommand{\@secnumfont}{\bfseries}
\patchcmd{\@startsection}{\@afterindenttrue}{\@afterindentfalse}{}{}
\patchcmd{\abstract}{\leftmargin3pc}{\leftmargin1pc}{}{}
\def\maketitle{\par
  \@topnum\z@ 
  \@setcopyright
  \thispagestyle{empty}
  \ifx\@empty\shortauthors \let\shortauthors\shorttitle
  \else \andify\shortauthors
  \fi
  \@maketitle@hook
  \begingroup
  \@maketitle
  \toks@\@xp{\shortauthors}\@temptokena\@xp{\shorttitle}%
  \toks4{\def\\{ \ignorespaces}}
  \edef\@tempa{%
    \@nx\markboth{\the\toks4
      \@nx\MakeUppercase{\the\toks@}}{\the\@temptokena}}%
  \@tempa
  \endgroup
  \c@footnote\z@
  \@cleartopmattertags
}
\newcommand{\qbinom}[2]{\begin{bmatrix}#1\\#2\end{bmatrix}}
\title[An identity of Andrews and Yee]{Combinatorial proof of an identity of Andrews and Yee}
\author[S. Chern]{Shane Chern}
\address{Department of Mathematics, The Pennsylvania State University, University Park, PA 16802, USA}
\email{shanechern@psu.edu; chenxiaohang92@gmail.com}
\date{}
\begin{document}

{\footnotesize\noindent To appear in \textit{Ramanujan J.}\\
\doi{10.1007/s11139-018-0012-0}}

\bigskip \bigskip

\maketitle

\begin{abstract}

Recently, Andrews and Yee studied two-variable generalizations of two identities involving partition functions $p_\omega(n)$ and $p_\nu(n)$ introduced by Andrews, Dixit and Yee. In this paper, we present a combinatorial proof of an interesting identity in their work.

\Keywords{Combinatorial proof, $q$-series identity, $q$-binomial coefficient.}

\MSC{Primary 05A19; Secondary 05A17, 11P81.}
\end{abstract}

\section{Introduction}

Recently, Andrews and Yee \cite{AY2017} studied two-variable generalizations of two identities involving partition functions $p_\omega(n)$ and $p_\nu(n)$ introduced by Andrews et al.~\cite{ADY2015}, where $p_\omega(n)$ enumerates the number of partitions of $n$ in which all odd parts are less than twice the smallest part, and $p_\nu(n)$ enumerates the number of partitions of $n$ counted by $p_\omega(n)$ with all parts being distinct. Their results are
\begin{align}
\sum_{n\ge 1} \frac{q^n}{(zq^n;q)_{n+1}(zq^{2n+2};q^2)_\infty}&= \sum_{n\ge 0}\frac{z^n q^{2n^2+2n+1}}{(q;q^2)_{n+1}(zq;q^2)_{n+1}},\label{eq:AY1}\\
\sum_{n\ge 0}q^n (-zq^{n+1};q)_{n}(-zq^{2n+2};q^2)_\infty &=\sum_{n\ge 0} \frac{z^n q^{n^2+n}}{(q;q^2)_{n+1}},\label{eq:AY2}
\end{align}
where we adopt the standard $q$-series notations
$$(a;q)_n:=\prod_{k=0}^{n-1} (1-a q^k)\quad\text{and}\quad (a;q)_\infty:=\prod_{k=0}^{\infty} (1-a q^k).$$

To prove the two identities, Andrews and Yee introduced the sum
$$S_n(i):=\sum_{s=0}^n \frac{q^{is}(q;q)_{n+s}}{(q^2;q^2)_s},$$
where $n$ and $i$ are nonnegative integers. Now the following surprising identity
\begin{equation}\label{eq:AY3}
S_n(1)=\sum_{s=0}^n \frac{q^{s}(q;q)_{n+s}}{(q^2;q^2)_s}=(q^2;q^2)_n
\end{equation}
plays an important role in proving \eqref{eq:AY1} and \eqref{eq:AY2}.

Andrews and Yee's proof of \eqref{eq:AY3} relies on certain recurrence relations of $S_n(i)$. In a personal communication between the author and Andrews, the following question was raised:

\begin{question}
Is there a combinatorial interpretation of \eqref{eq:AY3}?
\end{question}

\noindent The purpose of this paper is to give an affirmative answer.

\section{The combinatorial interpretation}

We first notice that to interpret $S_n(1)$ using the language of partition theory, we need a weighted partition, which comes from the numerator term $(q;q)_{n+s}$ in each summand. This makes the interpretation less direct.
Hence we may slightly rewrite \eqref{eq:AY3} by multiplying both sides by $(-q;q)_n/(q;q)_n$.

\begin{theorem}\label{th:2.1}
We have
\begin{equation}\label{eq:2.1}
\sum_{s=0}^n q^s (-q^{s+1};q)_{n-s} \qbinom{n+s}{s}_q =(-q;q)_n^2. 
\end{equation}
\end{theorem}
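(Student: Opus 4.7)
The plan is to prove the identity via an explicit weight-preserving bijection between the two sides. First I would set up the combinatorial interpretations. The right-hand side $(-q;q)_n^2=\prod_{i=1}^n(1+q^i)^2$ is the generating function for ordered pairs $(\alpha,\beta)$ of subsets of $\{1,\ldots,n\}$ (equivalently, pairs of partitions into distinct parts bounded by $n$), with weight $q^{\sigma(\alpha)+\sigma(\beta)}$, where $\sigma$ denotes the sum of elements. For the left-hand side, I would interpret the $s$-th summand as follows: the Gaussian coefficient $\qbinom{n+s}{s}_q$ generates partitions fitting in an $s\times n$ box, so after shifting each of the $s$ rows up by one, the factor $q^s\qbinom{n+s}{s}_q$ becomes the generating function for partitions $\mu$ with exactly $s$ parts drawn from $\{1,\ldots,n+1\}$. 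The factor $(-q^{s+1};q)_{n-s}$ generates subsets $B\subseteq\{s+1,\ldots,n\}$. Thus the $s$-summand counts triples $(s,\mu,B)$ weighted by $q^{|\mu|+\sigma(B)}$.

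The central task is to construct an explicit bijection $\Phi\colon(s,\mu,B)\mapsto(\alpha,\beta)$ and its inverse. The plan is to take $s$ to encode $|\beta|$ (with possible adjustment), to view $\mu$ as a sequence of slots obtained from $\beta$ by absorbing elements of $\alpha\cap\beta$ into its parts, and to place the elements of $\alpha\setminus\beta$ lying above $s$ into $B$. When an absorption would push a part of $\mu$ past the upper bound $n+1$, the construction must instead split off the contribution as a new part, which increments $s$. Processing the elements of $\alpha$ in a canonical order (say, increasing) and greedily attempting absorption before splitting provides a natural algorithm, with any element of $\alpha\setminus\beta$ whose value does not exceed $s$ forced to merge into $\mu$ rather than enter $B$.

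The hard part will be verifying that this assignment is actually bijective. The subtle cases are those in which a fresh part of $\mu$ produced by splitting happens to equal an existing part, or in which elements of $\alpha\setminus\beta$ must be routed through $\mu$ rather than into $B$. Describing the inverse explicitly---recovering $\beta$ from $\mu$ by distinguishing ``base'' parts (from $\beta$) from ``absorbed'' and ``split'' parts (from $\alpha\cap\beta$), and identifying which elements of $\alpha\setminus\beta$ ended up in $B$ versus $\mu$---will be the main technical challenge. Weight preservation should then follow by tracking the contribution of each element of $\alpha$ and $\beta$ on the right to the corresponding pieces of $\mu$ and $B$ on the left.
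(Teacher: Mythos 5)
Your combinatorial interpretations of both sides are correct: the right side does generate pairs of subsets $(\alpha,\beta)$ of $\{1,\ldots,n\}$, and the $s$-th summand on the left does generate triples $(s,\mu,B)$ with $\mu$ a partition into exactly $s$ parts from $\{1,\ldots,n+1\}$ and $B\subseteq\{s+1,\ldots,n\}$. But the proof stops there: the bijection $\Phi$, which is the entire content of the argument, is never actually defined. ``Absorbing,'' ``splitting,'' and ``greedily attempting absorption'' do not pin down a map, and the sketch as stated has concrete failure modes. For $n=2$ and $\alpha=\beta=\{1,2\}$ you start with $s=2$, $\mu=(2,1)$; absorbing $1$ into the part it matches gives $(2,2)$, and then absorbing $2$ overflows the bound $n+1=3$, so your rule says to split off a new part --- but that forces $s=3>n$, leaving the allowed range entirely (the unique weight-$6$ object on the left with $s\le n$ is $(2,(3,3),\emptyset)$, which requires a crosswise pairing of absorptions you have not specified). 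A second unresolved issue: since $s$ grows during the algorithm, an element of $\alpha\setminus\beta$ placed into $B$ because it exceeded the \emph{initial} $s$ may end up violating $B\subseteq\{s+1,\ldots,n\}$ for the \emph{final} $s$. You acknowledge that well-definedness and invertibility are ``the main technical challenge,'' but that challenge is the theorem; as written this is a plan, not a proof.

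For comparison, the paper avoids the direct bijection altogether by passing through the intermediate identity
\begin{equation*}
\sum_{s=0}^n q^s(-q^{s+1};q)_{n-s}\qbinom{n+s}{s}_q=\sum_{t=0}^n q^{\binom{t+1}{2}}\qbinom{2n+1}{n+1+t}_q=(-q;q)_n^2,
\end{equation*}
proving the first equality by a clean staircase-removal bijection on partition pairs and the second by complementation and negation on subsets of $\{-n,\ldots,n\}$ (the second equality is also immediate from the $q$-binomial theorem at $z=-q^{-n}$, $N=2n+1$). If you want to salvage a one-step bijection, you would need to specify exactly which part of $\mu$ each element of $\alpha$ is added to and prove the resulting configuration always lands in the target set and is uniquely reversible; routing through the symmetric intermediate sum is considerably easier.
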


Here
$$\qbinom{n+s}{s}_q:=\frac{(q;q)_{n+s}}{(q;q)_n (q;q)_s}$$
is the $q$-binomial coefficient.

To prove Theorem \ref{th:2.1}, we first need the following identity.

\begin{lemma}\label{le:2.2}
We have
\begin{equation}
\sum_{s=0}^n q^s (-q^{s+1};q)_{n-s} \qbinom{n+s}{s}_q = \sum_{t=0}^n q^{\binom{t+1}{2}} \qbinom{2n+1}{n+1+t}_q. 
\end{equation}
\end{lemma}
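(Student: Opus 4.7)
The plan is to prove the lemma via a direct bijection between the combinatorial objects generated by each side. On the LHS, $q^s \qbinom{n+s}{s}_q$ enumerates partitions $\alpha=(\alpha_1\ge\cdots\ge\alpha_s\ge 1)$ with exactly $s$ parts in $\{1,\ldots,n+1\}$, and $(-q^{s+1};q)_{n-s}$ enumerates strict partitions $\beta=(\beta_1>\cdots>\beta_t)$ with parts in $\{s+1,\ldots,n\}$, so the LHS equals $\sum_{(s,\alpha,\beta)} q^{|\alpha|+|\beta|}$. On the RHS, $q^{\binom{t+1}{2}}\qbinom{2n+1}{n+1+t}_q$ enumerates pairs $(t,\lambda)$ with $\lambda$ a partition fitting in an $(n-t)\times(n+1+t)$ rectangle, weighted by $q^{|\lambda|+\binom{t+1}{2}}$.

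I would define the bijection $(s,\alpha,\beta)\mapsto(t,\lambda)$ as follows. Set $t$ equal to the number of parts of $\beta$, and apply the standard staircase decomposition: setting $\nu_i=\beta_i-(s+t-i+1)$ produces a partition $\nu=(\nu_1\ge\cdots\ge\nu_t\ge 0)$ with each $\nu_i\le n-s-t$, and $|\beta|=st+\binom{t+1}{2}+|\nu|$. Then define
\[
\lambda_i=\alpha_i+t\quad(1\le i\le s),\qquad \lambda_{s+j}=\nu'_j\quad(1\le j\le n-s-t),
\]
where $\nu'$ is the conjugate of $\nu$. The first $s$ parts of $\lambda$ lie in $\{t+1,\ldots,n+1+t\}$ and the remaining parts lie in $\{0,\ldots,t\}$; since $\alpha_s+t\ge 1+t>t\ge\nu'_1$, $\lambda$ is a genuine partition fitting in the $(n-t)\times(n+1+t)$ rectangle. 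A direct computation gives $|\lambda|=|\alpha|+st+|\nu|$, so $|\alpha|+|\beta|=|\lambda|+\binom{t+1}{2}$, matching weights on both sides.

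The inverse reads $t$ off the RHS pair, sets $s$ equal to the number of parts of $\lambda$ strictly greater than $t$, recovers $\alpha$ via $\alpha_i=\lambda_i-t$ for $i\le s$, and reconstructs $\nu$ by conjugating the tail $(\lambda_{s+1},\ldots,\lambda_{n-t})$, from which $\beta$ is recovered by adding back the staircase $\beta_i=\nu_i+s+t-i+1$. The principal obstacle I anticipate is spotting the right splitting threshold for $\lambda$'s parts: once one recognizes that the parts of $\lambda$ exceeding $t$ precisely encode the ``shifted $\alpha$'' rows while those at most $t$ encode $\beta$ via staircase-and-conjugation, the remaining verifications (well-definedness, mutual inverseness, and weight preservation) reduce to routine bookkeeping.
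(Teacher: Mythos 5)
Your proposal is correct and is essentially the paper's argument written in conjugate coordinates: both bijections set $t$ equal to the number of parts of the strict partition, peel off the staircase accounting for $q^{\binom{t+1}{2}}$, and merge the reduced remainder with the bounded partition coming from the $q$-binomial coefficient. The only cosmetic difference is that you read $q^s\qbinom{n+s}{s}_q$ as a partition with exactly $s$ parts in $\{1,\dots,n+1\}$ rather than as a partition with at most $n+1$ parts and largest part $s$, which forces one extra conjugation before stacking; the underlying weight-preserving correspondence is the same.
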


\begin{proof}
Let $\mathcal{B}_1$ be the set of partition pairs $(\lambda,\pi)$ such that $\lambda$ is a distinct partition (which can be empty) with largest part $\le n$ and $\pi$ is a partition with at most $n+1$ parts and largest part less than the smallest part of $\lambda$ (if $\lambda$ is empty, we assume that the largest part of $\pi$ is at most $n$). Then
$$\sum_{(\lambda,\pi)\in \mathcal{B}_1} q^{|\lambda|+|\pi|}=\sum_{s=0}^n q^s (-q^{s+1};q)_{n-s} \qbinom{n+s}{s}_q.$$
Here $|\lambda|$ means the sum of all parts of $\lambda$. In the sequel, one may also use the notation $|(\lambda,\pi)|$ to denote $|\lambda|+|\pi|$ for a partition pair $(\lambda,\pi)$. We further assume that $\ell=\ell(\lambda)$ counts the number of parts of $\lambda$, and $\lambda_1$ and $\lambda_\ell$ represent respectively the largest and smallest part of $\lambda$.

We also denote by $\mathcal{B}_2$ the set of partition pairs $(\mu,\nu)$ such that $\mu$ is a distinct partition with its parts being the first $t$ consecutive positive integers for some $0\le t\le n$ (i.e.~$\mu=(t,t-1,\ldots,2,1)$) and $\nu$ is a partition with at most $n+1+t$ parts and largest part $\le n-t$. Then
$$\sum_{(\mu,\nu)\in \mathcal{B}_2} q^{|\mu|+|\nu|}=\sum_{t=0}^n q^{\binom{t+1}{2}} \qbinom{2n+1}{n+1+t}_q.$$

Now consider the following map $\phi: \mathcal{B}_1\to \mathcal{B}_2$ with $\phi((\lambda,\pi))=(\mu,\nu)$ given by
\begin{enumerate}[\noindent (1)]
\item taking out $\ell(\lambda)$ from $\lambda_1$, $\ell(\lambda)-1$ from $\lambda_2$, ..., and $1$ from $\lambda_\ell$ to form $\mu$ (notice that $0\le \ell(\lambda)\le n$);
\item constructing a new partition $\lambda^*$ by $\lambda^*_i=\lambda_i-(\ell(\lambda)+1-i)$ for $1\le i\le \ell$ (notice that if the largest part of $\pi$ is $s$, then $s\le \lambda^*_\ell\le \cdots\le \lambda^*_1\le n-\ell(\lambda)$);
\item appending $\pi$ to the bottom of $\lambda^*$ to form $\nu$ (notice that this $\nu$ has at most $n+1+\ell(\lambda)$ parts with largest part $\le n-\ell(\lambda)$).
\end{enumerate}
For example, when $n=5$, if $\lambda=(5,3)$ and $\pi=(2,2,2,1,1)$, then $\phi((\lambda,\pi))=(\mu,\nu)$ with $\mu=(2,1)$ and $\nu=(3,2,2,2,2,1,1)$.

Notice that this map is well-defined and weight-preserving (viz.~$|\phi((\lambda,\pi))|=|(\lambda,\pi)|$). It is also easy to check that the map is invertible, and hence it is a bijection. This proves the lemma.
\end{proof}

\begin{remark}
Notice that the limiting case $q\to 1$ tells us
\begin{equation}\label{eq:1:q=1}
\sum_{s=0}^n 2^{n-s}\binom{n+s}{s} = \sum_{t=0}^n \binom{2n+1}{n+1+t}.
\end{equation}
This identity has a succinct combinatorial proof. We first consider the set $\mathcal{S}=\{1,2,\ldots,2n+1\}$. The r.h.s.~of \eqref{eq:1:q=1} counts the number of subsets of $\mathcal{S}$ with at least $n+1$ elements. On the other hand, for each subset of $\mathcal{S}$, we assume that the elements are in increasing order. Then the number of subsets such that the $(n+1)$th element is $n+1+s$ for some $0\le s\le n$ is
$$2^{n-s}\binom{n+s}{n}=2^{n-s}\binom{n+s}{s}.$$
Hence \eqref{eq:1:q=1} follows. We further notice that the complement of a subset of $\mathcal{S}$ with at least $n+1$ elements in $\mathcal{S}$ is a subset of $\mathcal{S}$ with at most $n$ elements. Hence subsets with at least $n+1$ elements attain half of $2^{\mathcal{S}}$. It follows that
$$\sum_{t=0}^n \binom{2n+1}{n+1+t}=\frac{2^{2n+1}}{2}=4^n,$$
which proves the limiting case ($q\to 1$) of \eqref{eq:2.1}.
\end{remark}

It remains to prove
\begin{equation}\label{eq:middle-step}
\sum_{t=0}^n q^{\binom{t+1}{2}} \qbinom{2n+1}{n+1+t}_q = (-q;q)_n^2.
\end{equation}
One may obtain an analytic proof by taking $z=-q^{-n}$ and $N=2n+1$ in the following $q$-binomial theorem (cf.~\cite[p.~36, Theorem 3.3]{And1976}):
$$(z;q)_N = \sum_{t=0}^N \qbinom{N}{t}(-1)^t z^t q^{\binom{t}{2}}.$$

However, since the purpose of this paper is to interpret Theorem \ref{th:2.1} combinatorially, we provide the following proof.

\begin{proof}[Combinatorial proof of \eqref{eq:middle-step}]
Again we need to slightly rewrite \eqref{eq:middle-step} by multiplying both sides by $q^{-n(n+1)/2}$:
$$\sum_{t=0}^n q^{\binom{t+1}{2}} q^{-\binom{n+1}{2}} \qbinom{2n+1}{n+1+t}_q = q^{-\binom{n+1}{2}} (-q;q)_n^2.$$

Now we consider a generalized distinct partition with parts in
$$\mathcal{N}=\{-n,-n+1,\ldots,-1,0,1,\ldots,n-1,n\}.$$
Let $\mathcal{P}$ be the set of such distinct partitions. We define the following two operators for $\lambda=\{\lambda_1,\ldots,\lambda_\ell\}\in\mathcal{P}$ (here we write $\lambda$ in set form):
\begin{enumerate}[\noindent (1)]
\item additive inverse: $-\lambda=\{-\lambda_1,\ldots,-\lambda_\ell\}$;
\item complement: if $\lambda\subset \pi\subset \mathcal{N}$, then the complement of $\lambda$ in $\pi$ is $\pi\backslash\lambda$ as set complement.
\end{enumerate}
Notice that the additive inverse and the complement in $\mathcal{N}$ are bijections from $\mathcal{P}$ to itself.

Let $\mu\in\mathcal{P}$ be either an empty partition or a distinct partition with merely negative parts. We have a bijection $\psi$ to the set of distinct partitions with merely positive parts (empty partition included) given by
$$\psi(\mu)=-(\{-n,-n+1,\ldots,-1\}\backslash \mu).$$
We further have
$$q^{|\mu|}=q^{-\binom{n+1}{2}}q^{|\psi(\mu)|}.$$
At last, for $\lambda\in\mathcal{P}$, whether $0$ is a part of $\lambda$ or not does not affect $|\lambda|$. Hence we get
$$\sum_{\lambda\in\mathcal{P}}q^{|\lambda|}=2q^{-\binom{n+1}{2}}(-q;q)_n^2.$$

We next consider $\lambda\in\mathcal{P}$ with at least $n+1$ parts. Denote by $\mathcal{P}_>$ the set of such partitions. We have a bijection $\tau$ between $\mathcal{P}_>$ and the set of distinct partitions in $\mathcal{P}$ with at most $n$ parts given by
$$\tau(\lambda)=-(\mathcal{N}\backslash \lambda).$$
Also we have
$$q^{|\lambda|}=q^{|\tau(\lambda)|}.$$
Hence
\begin{equation}\label{eq:half}
\sum_{\lambda\in\mathcal{P}_>}q^{|\lambda|}=\frac{1}{2}\sum_{\lambda\in\mathcal{P}}q^{|\lambda|}=q^{-\binom{n+1}{2}}(-q;q)_n^2.
\end{equation}

To prove \eqref{eq:middle-step}, we count the l.h.s.~of \eqref{eq:half} in a different way. Again suppose that $\lambda=\{\lambda_1,\lambda_2,\ldots,\lambda_{n+1+t}\}\in\mathcal{P}_>$ has $n+1+t$ parts for some $0\le t\le n$ with $-n\le \lambda_1<\lambda_2<\cdots<\lambda_{n+1+t}\le n$.

We denote by $\mathcal{B}_3$ the set of partition pairs $(\mu,\nu)$ such that $\mu$ is a distinct partition with its parts being the $n+1+t$ consecutive integers starting at $-n$ for some $0\le t\le n$ (i.e.~$\mu=\{-n,-n+1,\ldots,0,1,\ldots,t\}$) and $\nu$ is an ordinary partition with at most $n+1+t$ parts and largest part $\le n-t$.

We now construct a bijection $\rho$ between $\mathcal{P}_>$ and $\mathcal{B}_3$ by
\begin{enumerate}[\noindent (1)]
\item taking out $-n$ from $\lambda_1$, $-n+1$ from $\lambda_2$, ..., and $t$ from $\lambda_{n+1+t}$ to form $\mu$ (notice that $0\le t\le n$);
\item putting $\nu_i=\lambda_i-(-n+i-1)$ for $1\le i\le n+1+t$ (notice that $0\le \nu_1\le \nu_2\le \cdots\le \nu_{n+1+t}\le n-t$).
\end{enumerate}
For example, when $n=5$, if $\lambda=\{-4,-2,-1,0,2,4,5\}$, then $\rho(\lambda)=(\mu,\nu)$ with $\mu=\{-5,-4,-3,-2,-1,0,1\}$ and $\nu=\{1,2,2,2,3,4,4\}$.

Again it is easy to check that $\rho$ is well-defined, weight-preserving (viz.~$|\rho(\lambda)|=|\lambda|$) and invertible. Hence we have
\begin{equation}
\sum_{\lambda\in\mathcal{P}_>}q^{|\lambda|}=\sum_{t=0}^n q^{\binom{t+1}{2}} q^{-\binom{n+1}{2}} \qbinom{2n+1}{n+1+t}_q.
\end{equation}

Together with \eqref{eq:half}, we complete the proof.
\end{proof}

\section{Further remarks}

At the end of \cite{AY2017}, Andrews and Yee asked for bijective proofs of \eqref{eq:AY1} and \eqref{eq:AY2}. We notice that their proofs only rely on certain easy $q$-series manipulations as well as the sum $S_n(i)$. In this sense, most steps in their proofs can be interpreted combinatorially. However, we also notice that in several ``middle steps,'' both sides of the identities are being multiplied by certain functions of $q$. Due to this, the direct combinatorial interpretations of \eqref{eq:AY1} and \eqref{eq:AY2} are still fuzzy.

On the other hand, Andrews and Yee asserted that the combinatorial proofs of the following three identities are not difficult:
\begin{align}
\sum_{n\ge 0}\frac{z^n q^{2n^2+2n}}{(q;q^2)_{n+1} (zq;q^2)_{n+1}} &= \sum_{n\ge 0} \frac{z^n q^n}{(q;q^2)_{n+1}},\label{eq:omega}\\
\sum_{n\ge 0}\frac{q^{n^2+n}}{(-zq;q^2)_{n+1}}&= \sum_{n\ge 0} (q/z;q^2)_n (-zq)^n,\label{eq:nu1}\\
\sum_{n\ge 0}\frac{z^n q^{n^2+n}}{(-q;q^2)_{n+1}}&= \sum_{n\ge 0} (zq;q^2)_n (-q)^n.\label{eq:nu2}
\end{align}
However, it is still worth finishing these easy proofs to make their argument complete.

The proof of \eqref{eq:omega} is relatively routine. We first take $q\to q^2$, $z\to z^2$ in \eqref{eq:omega} and multiply by $zq$ on both sides:
\begin{equation}\label{eq:omega-1}
\sum_{n\ge 0}\frac{z^{2n+1} q^{(2n+1)^2}}{(q^2;q^4)_{n+1} (z^2q^2;q^4)_{n+1}} = \sum_{n\ge 0} \frac{z^{2n+1} q^{2n+1}}{(q^2;q^4)_{n+1}}.
\end{equation}

\begin{proof}[Combinatorial proof of \eqref{eq:omega-1}]
Recall that the Durfee square of a partition is the largest square that is contained within the partition's Ferrers diagram.

Now we consider partitions with the size of its Durfee square being an odd number and both the parts below its Durfee square and the conjugate of the parts to the right of its Durfee square forming an odd partition where each different part occurs an even number of times. It is easy to see that the largest part of such partition is an odd number. Let $\mathcal{DS}_k$ denote the set of such partitions with the largest part being $2k+1$. One readily writes the generating function
$$\sum_{k\ge 0}\sum_{\lambda\in \mathcal{DS}_k}z^{2k+1} q^{|\lambda|}=\sum_{n\ge 0}\frac{z^{2n+1} q^{(2n+1)^2}}{(q^2;q^4)_{n+1} (z^2q^2;q^4)_{n+1}}.$$

On the other hand, let $\mathcal{OE}_k$ be the set of partition pairs $(\mu,\nu)$ where $\mu$ has only one odd part $2k+1$ and $\nu$ is an odd partition with largest part $\le 2k+1$ where each different part occurs an even number of times. We have
$$\sum_{k\ge 0}\sum_{(\mu,\nu)\in \mathcal{OE}_k}z^{2k+1} q^{|\mu|+|\nu|}=\sum_{n\ge 0} \frac{z^{2n+1} q^{2n+1}}{(q^2;q^4)_{n+1}}.$$

There is a trivial bijection between $\mathcal{DS}_k$ and $\mathcal{OE}_k$. Given $\lambda\in \mathcal{DS}_k$, we split the largest part and the remaining parts as $\mu$ and $\nu$ respectively. Then $(\mu,\nu)\in \mathcal{OE}_k$. Inversely, if $(\mu,\nu)\in \mathcal{OE}_k$, we attach $\mu$ to the top of $\nu$ to get a partition $\lambda\in \mathcal{DS}_k$. To see this, we only need to show that the size of the Durfee square of $\lambda$ is an odd number. Assume not, then the largest part $\lambda_i$ below the Durfee square is strictly smaller than the part above $\lambda_i$. Also, there is an odd number of parts below the Durfee square. Hence there exists a part of $\nu$ that occurs an odd number of times, leading to a contradiction.

The desired identity follows directly from this bijection.
\end{proof}

\begin{example}
Figure \ref{Figure 0} illustrates the bijection in the proof of \eqref{eq:omega-1}.

\begin{figure}[ht]
\caption{Bijection in the proof of \eqref{eq:omega-1}}
\vspace{1em}
\label{Figure 0}
\includegraphics[width=0.8\textwidth]{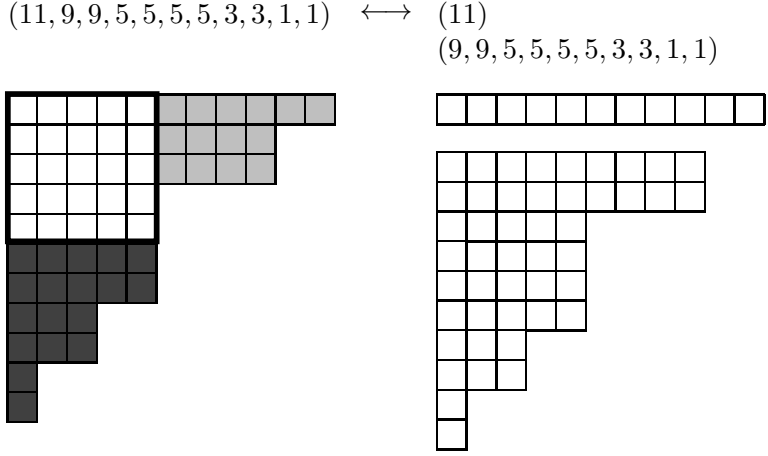}
\end{figure}
\end{example}

The proof of \eqref{eq:nu1} and \eqref{eq:nu2} is, in some sense, more intriguing. In fact, we will prove the following identity:
\begin{equation}\label{eq:nu3}
\sum_{n\ge 0}\frac{q^{n^2+n}x^n}{(yq;q^2)_{n+1}}= \sum_{n\ge 0} (-xq/y;q^2)_n (yq)^n,
\end{equation}
which is derived by taking $x\to xq$ and $y\to yq$ in \cite[p.~29, Example 6]{And1976}:
$$\sum_{n\ge 0}\frac{q^{n^2}x^n}{(y;q^2)_{n+1}}= \sum_{n\ge 0} (-xq/y;q^2)_n y^n.$$

\begin{proof}[Combinatorial proof of \eqref{eq:nu3}]
Let $\mathcal{O}_{n,k}$ denote the set of partition pairs $(\lambda,\pi)$ where $\lambda$ is a $n\times (n+1)$ rectangle (i.e.~$\lambda$ has $n+1$ parts and each part is $n$) and $\pi$ is an odd partition with exactly $k$ parts where the largest part is at most $2n+1$. We have
$$\sum_{n\ge 0}\sum_{k\ge 0}\sum_{(\lambda,\pi)\in \mathcal{O}_{n,k}}x^n y^k q^{|\lambda|+|\pi|}=\sum_{n\ge 0}\frac{q^{n^2+n}x^n}{(yq;q^2)_{n+1}}.$$

On the other hand, let $\mathcal{DO}_{n,k}$ be the set of partition pairs $(\mu,\nu)$ where $\mu$ has only one part $n+k$ and $\nu$ is a distinct odd partition with exactly $n$ parts where the largest part is at most $2(n+k)-1$. One may write the generating function
$$\sum_{n\ge 0}\sum_{k\ge 0}\sum_{(\mu,\nu)\in \mathcal{DO}_{n,k}}x^n y^k q^{|\mu|+|\nu|}=\sum_{n\ge 0} (-xq/y;q^2)_n (yq)^n,$$
by enumerating the size of $\mu$.

We next construct a bijection between $\mathcal{O}_{n,k}$ and $\mathcal{DO}_{n,k}$.

Given an arbitrary $(\lambda,\pi)\in \mathcal{O}_{n,k}$. We split the largest part of $\pi$, say $2s+1$ ($0\le s\le n$), into two pieces, one of which is $s+1$, and the other is $s$. We now append $s+1$ to the right of $\lambda$ and $s$ below $\lambda$. Repeating the same process for all parts of $\pi$, then we get a unique partition $\nu^*$. Notice that the largest part of $\nu^*$ equals $n+k$.

We take out the largest part of $\nu^*$ to form $\mu$. The remaining parts form a new partition $\nu'$. There are three trivial observations:
\begin{enumerate}[\noindent (1)]
\item $\nu'$ is a self-conjugate partition (i.e.~the conjugate of its Ferrers diagram remains the same as its Ferrers diagram);
\item the size of the Durfee square of $\nu'$ is $n$;
\item the largest part of $\nu'$ is at most $n+k$.
\end{enumerate}

At last, we recall that there is a bijection between self-conjugate partitions with the size of its Durfee square being $n$ and distinct odd partitions with exactly $n$ parts. Hence, we can transform $\nu'$ into $\nu$, a distinct odd partition with exactly $n$ parts where the largest part is at most $2(n+k)-1$.

Clearly, we have $(\mu,\nu)\in \mathcal{DO}_{n,k}$. Notice that the above process is invertible, and hence we obtain the bijection.

The desired identity follows directly by comparing the generating functions.
\end{proof}

\begin{example}
Figure \ref{Figure 1} illustrates the bijection in the proof of \eqref{eq:nu3}.

\begin{figure}[ht]
\caption{Bijection in the proof of \eqref{eq:nu3}}
\vspace{1em}
\label{Figure 1}
\includegraphics[width=0.8\textwidth]{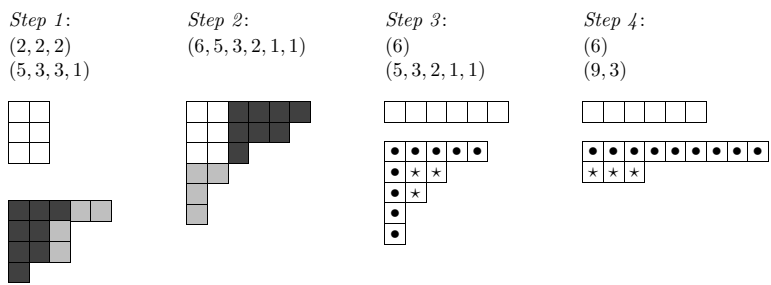}
\end{figure}
\end{example}

\subsection*{Acknowledgements}

I would like to thank George E. Andrews and Ae Ja Yee for many helpful discussions. I also want to thank the referee for the careful reading and helpful comments.

\bibliographystyle{amsplain}

\end{document}